\newtheorem{theorem}{Theorem}
\newtheorem{lemma}{Lemma}
\newcommand{\bp}{\begin{proof}[Proof:]}
\newcommand{\ep}{\end{proof}}
\newcommand{\ds}{\displaystyle}
\newcommand{\Obig}{\mathcal{O}}
\newcommand{\Ob}[1]{\mathcal{O}\left(#1\right)}
\begin{document}

\baselineskip=17pt

\title{\bf Diophantine approximation by special primes}

\author{\bf S. I. Dimitrov}
\date{2017}
\maketitle
\begin{abstract}
We show that whenever $\delta>0$, $\eta$ is real and constants $\lambda _i$ satisfy some necessary
conditions, there are infinitely many prime triples $p_1,\, p_2,\, p_3$ satisfying the inequality
$|\lambda _1p_1 + \lambda _2p_2 + \lambda _3p_3+\eta|<(\max p_j)^{-1/12+\delta}$
and such that, for each $i\in\{1,2,3\}$, $p_i+2$ has at most $28$ prime factors.
\medskip

{\bf Keywords}:
Rosser's weights, vector sieve, circle method. \\[2pt]
{\bf  2000 Math.\ Subject Classification}.  11D75, 11N36, 11P32.
\par
\end{abstract}
\section{Introduction and statements of the result.}
\indent

In 1973 Vaughan \cite{Vaughan} proved that whenever $\delta>0$, $\eta$ is real and constants $\lambda_i$ satisfy some necessary
conditions, there are infinitely many prime triples $p_1,\,p_2,\,p_3$ such that
\begin{equation}\label{1}
|\lambda_1p_1+\lambda_2p_2+\lambda_3p_3+\eta|<(\max p_j)^{-\xi+\delta}
\end{equation}
for $\xi=1/10$. Latter the upper bound for $\xi $ was improved by Baker and Harman \cite{Baker-Harman} to $\xi=1/6$,
by Harman \cite{Harman} to $\xi=1/5$
and the best result up to now is due to K. Matom\"{a}ki \cite{Mato2} with $\xi=2/9$.

On the other hand a famous and still unsolved problem in Number Theory is the prime-twins
conjecture, which states that there exist infinitely many prime numbers
$p$ such that $p+2$ is also a prime.

Up to now many hybrid theorems were proved. One of the best result belongs to K. Matom\"{a}ki
and Shao \cite{Mato1}.
They proved that every sufficiently large odd integer $n$ such that $n\equiv 3\,\pmod {6}$ can be represented  as a sum
\begin{equation*}
n=p_1+p_2+p_3
\end{equation*}
of primes $p_1,\,p_2,\,p_3$ such that
\begin{equation*}
p_1+2=P'_2\,, \quad p_2+2=P''_2\,,  \quad p_3+2=P'''_2\,,
\end{equation*}
where $P_l$ is a number with at most $l$ prime factors.

In the present paper we consider (\ref{1}) with primes of the form specified above. We prove the following theorem.

\begin{theorem}
Suppose that $\lambda_1,\lambda_2,\lambda_3$ are non-zero real numbers, not all of
the same sign, that $\eta$ is real, and that $\lambda_1/\lambda_2$ is irrational.
Let $\xi=1/12$ and $\delta>0$.
Then there are infinitely many ordered triples of primes $p_1,\,p_2,\,p_3$  for which

\begin{equation}\label{Theorem}
 |\lambda_1p_1+\lambda_2p_2+\lambda_3p_3+\eta|<(\max p_j)^{-\xi+\delta}
\end{equation}
and
\[
p_1+2=P'_{28}\,, \quad p_2+2=P''_{28}\,, \quad  p_3+2=P'''_{28}\,.
\]
\end{theorem}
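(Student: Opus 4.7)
The plan is to combine the Davenport--Heilbronn adaptation of the circle method to Diophantine inequalities with a three-dimensional vector sieve built from Rosser's weights, in the spirit of the Matom\"{a}ki--Shao ternary theorem cited above. Let $N$ be a large parameter, set $\Delta = N^{-\xi + \delta}$, and choose a smooth kernel $K(\alpha)$ whose Fourier transform approximates the indicator of $[-\Delta,\Delta]$, for instance
\[
K(\alpha) = \left(\frac{\sin \pi\Delta\alpha}{\pi\alpha}\right)^{2}.
\]
Introduce sieve-weighted exponential sums
\[
S_{j}^{\pm}(\alpha) = \sum_{N/2 < p \leq N}(\log p)\,\lambda^{\pm}(p+2)\,e(\lambda_{j} p \alpha) \qquad (j = 1,2,3),
\]
where $\lambda^{\pm}$ are the upper and lower Rosser linear sieve weights of level $D = N^{\sigma}$, with $\sigma$ taken as large as the minor arc estimates will tolerate. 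The standard three-dimensional vector-sieve inequality then bounds the joint indicator of the almost-prime events $\{p_{j}+2 \text{ is a } P_{28}\}$ below by a linear combination of products of the $S_{j}^{\pm}$, so it suffices to obtain a positive lower bound of order $\Delta N^{2}(\log N)^{-3}$ for
\[
\Gamma = \int_{-\infty}^{\infty}\bigl(S_{1}^{-}S_{2}^{+}S_{3}^{+} + S_{1}^{+}S_{2}^{-}S_{3}^{+} + S_{1}^{+}S_{2}^{+}S_{3}^{-} - 2\,S_{1}^{+}S_{2}^{+}S_{3}^{+}\bigr)\,e(\eta\alpha)\,K(\alpha)\,d\alpha.
\]

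Following Davenport--Heilbronn, I would dissect $\mathbb{R}$ into a major arc $\mathfrak{M} = [-\tau_{1},\tau_{1}]$, a minor arc $\mathfrak{m} = \{\tau_{1}<|\alpha|\leq \tau_{2}\}$ and a trivial arc $\mathfrak{t} = \{|\alpha|>\tau_{2}\}$ for suitable $\tau_{1},\tau_{2}$ depending on $N$ and $\delta$. On $\mathfrak{M}$ each $S_{j}^{\pm}(\alpha)$ is approximated by its expected main term using the Siegel--Walfisz theorem together with the fundamental lemma of the linear sieve; the sign hypothesis on $\lambda_{1},\lambda_{2},\lambda_{3}$ then forces the resulting contribution to $\Gamma$ to be a positive multiple of $\Delta N^{2}(\log N)^{-3}$, with the Rosser functions $F,f$ at the chosen sifting level dictating the value of the almost-prime parameter $r = 28$. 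On $\mathfrak{t}$ the bound $K(\alpha)\ll \alpha^{-2}$ combined with the trivial estimates $|S_{j}^{\pm}(\alpha)|\ll N$ makes the contribution negligible.

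The real obstacle is the minor arc. A Cauchy--Schwarz split of the form
\[
\int_{\mathfrak{m}}\bigl|S_{1}^{\pm}S_{2}^{\pm}S_{3}^{\pm}\bigr|\,K(\alpha)\,d\alpha \leq \Bigl(\sup_{\alpha\in\mathfrak{m}}|S_{1}^{\pm}(\alpha)|\Bigr)\int_{\mathbb{R}}|S_{2}^{\pm}S_{3}^{\pm}|\,K(\alpha)\,d\alpha
\]
reduces the task to two pieces: a mean square of a single sum, controlled by Plancherel combined with a Brun--Titchmarsh bound for the Rosser weights; and a Weyl-type bound for a sieve-weighted prime exponential sum, which is the genuinely delicate ingredient. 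For the latter I would apply Vaughan's identity after a Buchstab decomposition of $\lambda^{\pm}$, reducing to Type~I sums $\sum_{m\leq M}a_{m}\sum_{n}e(\lambda_{j}mn\alpha)$ and Type~II sums $\sum_{m,n}a_{m}b_{n}e(\lambda_{j}mn\alpha)$, and then control the denominators via a continued-fraction approximation to $\lambda_{j}\alpha$; the irrationality of $\lambda_{1}/\lambda_{2}$ enters precisely here, by forcing the denominator of the rational approximation into a useful range throughout $\mathfrak{m}$. Balancing the sieving level $\sigma$ (which has to be taken large enough to keep the sieve constant non-trivial, i.e.\ $r$ as small as 28) against the saving from these bilinear estimates (which caps $\xi$) gives precisely the exponent $\xi = 1/12$ claimed in the theorem.
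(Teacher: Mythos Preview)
Your overall architecture matches the paper's: Davenport--Heilbronn dissection into three ranges, the vector-sieve inequality $\Lambda_1\Lambda_2\Lambda_3\ge\Lambda_1^{-}\Lambda_2^{+}\Lambda_3^{+}+\Lambda_1^{+}\Lambda_2^{-}\Lambda_3^{+}+\Lambda_1^{+}\Lambda_2^{+}\Lambda_3^{-}-2\Lambda_1^{+}\Lambda_2^{+}\Lambda_3^{+}$ with Rosser weights, and a pointwise minor-arc bound for the sieve-weighted prime exponential sum coming from Type~I/II estimates. The paper quotes the Todorova--Tolev bound $\mathfrak{L}(X)\ll X^{\varepsilon}(X^{11/12}+Xq^{-1/2}+X^{1/2}q^{1/2}+q)$ as a black box rather than redeveloping Vaughan's identity, and uses a smooth cutoff $\theta$ rather than the Fej\'er kernel, but these are inessential differences.

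There is, however, one genuine slip and one omission worth flagging. On the major arc you invoke Siegel--Walfisz, but your sieve level is $D=N^{\sigma}$ with $\sigma>0$ fixed; Siegel--Walfisz only handles moduli $d\le(\log N)^{A}$ and cannot give $S_{j}^{\pm}(\alpha)\approx I(\lambda_{j}\alpha)\sum_{d}\lambda^{\pm}(d)/\varphi(d)$ with the required error. One must reduce by partial summation to a Bombieri--Vinogradov type average over $d\le D$, which is exactly what the paper's Lemma~5 (following Tolev) does. Relatedly, your account of how $\xi=1/12$ and $r=28$ arise is not quite right: the exponent $11/12$ is intrinsic to the Todorova--Tolev minor-arc lemma, and that same lemma is stated only for the specific level $D=X^{47/450-\varepsilon_{0}}$; with $D$ thus fixed one then chooses $s=\log D/\log z\approx2.948$ so that $f(s)-\tfrac{2}{3}F(s)>0$, giving $\beta\approx0.0351$ and hence $r=[\beta^{-1}]=28$. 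There is no balancing of $\sigma$ against $\xi$. Finally, on $\mathfrak{m}$ you cannot simply pull out $\sup_{\alpha}|S_{1}^{\pm}(\alpha)|$: the irrationality of $\lambda_{1}/\lambda_{2}$, together with the choice $X=q_{0}^{12/5}$, only guarantees that for each $t$ \emph{at least one} of $\lambda_{1}t,\lambda_{2}t$ has a rational approximation with $q\in[X^{1/6},X^{5/6}]$. The paper therefore extracts $\min\bigl(|L^{\pm}(\lambda_{1}t,X)|,|L^{\pm}(\lambda_{2}t,X)|\bigr)$ from the integrand and bounds that by $X^{11/12+\varepsilon}$ via its Lemma~7.
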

By choosing the parameters in a different way we may obtain other similar results,
for example $\xi=-9/350\,,\,p_i+2=P_{20}\,,i=1,2,3$.

Result of this type were obtained by  Dimitrov and Todorova \cite{DimTod-1}.
Combining the circle and sieve methods and using the Bombieri -- Vinogradov prime number theorem
they proved (\ref{Theorem}) with right-hand side $[\log(\max p_j)]^{-A}$, $A>1$ and primes
$p_1,\,p_2,\,p_3$ such that $p_i+2=P_8\,,i=1,2,3$.
In this paper we improve the right-hand side of \cite{DimTod-1}.
Obviously this is at the expense of the number of the prime factors of $p_i+2$.

\section{Notations and some lemmas.}
\indent

For positive $A$ and $B$ we
write $A\asymp B$ instead of $A\ll B\ll A$.
As usual $\varphi(n)$ and $\mu (n)$
denote  Euler's function and M\"{o}bius' function.
Let $(m_1,m_2)$ and $[m_1,m_2]$ be
the greatest common divisor and the least common multiple
of $m_1,m_2$ respectively. Instead of $m\equiv n\,\pmod {k}$
we write for simplicity $m\equiv n(k)$.  As usual, $[y]$ denotes
the integer part of $y$, $e(y)=e^{2\pi \imath y}$. The letter
$\varepsilon$ denotes an arbitrary small positive number, not the
same in all appearances. For example this convention allows us to
write $x^{\varepsilon }\log x\ll x^{\varepsilon }$.
Since $\lambda_1/\lambda_2$ is irrational, there are infinitely many different convergents
$a_0/q_0$ to its continued fraction, with
\begin{equation}\label{q0}
  \bigg|\frac{\lambda_1}{\lambda_2}- \frac{a_0}{q_0}\bigg|<\frac{1}{q_0^2}
\end{equation}
where $(a_0, q_0) = 1, q_0 \geq1 $ and $a_0\neq0$ .
We choose $q_0$ to be large in terms of $\lambda_1,\lambda_2,\lambda_3$ and  $\eta$,
and make the following definitions.

\begin{align}
\label{X}
&X=q_0^{12/5}\,;\\
\label{tau}
&\tau=X^{-5/6}\log X\,;\\
\label{eps}
&\vartheta=X^{-1/12+\delta}\,,\;\;\delta>0\,;\\
\label{H}
&H=\frac{\log^2X}{\vartheta}\,;\\
\label{z}
&z=X^\beta\,,\;\;0<\beta<1/30\,;\\
\label{D}
&D=X^{47/450-\varepsilon_0}\,,\;\;\varepsilon_0=0.001\,;\\
\label{notPz}
&P(z)=\prod\limits_{2<p\le z}p\,,\,\ \mbox{$ p$ -prime number}\,;\\
\label{I}
&I(\alpha)=\int\limits_{\lambda_0X}^{X}e(\alpha y)dy\,.
\end{align}
The value of $\beta$  will be specified latter.

Let $\lambda ^{\pm}(d)$ be the lower and upper bounds Rosser's weights of level
$D$, hence
\begin{equation}\label{Rosser}
|\lambda ^{\pm}(d)|\leq1\,,\quad\lambda ^{\pm}(d)=0\;\;  \mbox{  if }\;\; d \geq D \;\;\mbox{ or }\;\; \mu (d)=0\,.
\end{equation}
For further properties of Rosser's weights we refer to \cite{Iwa1}, \cite{Iwa2}.
\begin{lemma}\label{Fourier}Let $\vartheta\in \mathbb{R}$ and $k\in \mathbb{N}$.
There exists a function $\theta(y)$ which is $k$ times continuously differentiable and
such that
\begin{align*}
  &\theta(y)=1\quad\quad\quad\mbox{for }\quad\quad|y|\leq 3\vartheta/4\,;\\[6pt]
  &0\leq\theta(y)<1\quad\mbox{for}\quad3\vartheta/4 <|y|< \vartheta\,;\\[6pt]
  &\theta(y)=0\quad\quad\quad\mbox{for}\quad\quad|y|\geq \vartheta\,.
\end{align*}
and its Fourier transform
\begin{equation*}
\Theta(x)=\int\limits_{-\infty}^{\infty}\theta(y)e(-xy)dy
\end{equation*}
satisfies the inequality
\begin{equation*}
|\Theta(x)|\leq\min\bigg(\frac{7\vartheta}{4},\frac{1}{\pi|x|},\frac{1}{\pi |x|}
\bigg(\frac{k}{2\pi |x|\vartheta/8}\bigg)^k\bigg)\,.
\end{equation*}
\end{lemma}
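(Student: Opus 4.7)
The plan is the classical construction of a smooth bump via convolution: I will realize $\theta$ as the convolution of a ``fat'' indicator with an iterated ``thin'' box, so that the support, height and regularity properties fall out of the support calculus, while $\Theta$ factors as a product of sinc-type functions from which the three bounds in the minimum can be read off directly.

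Explicitly, let $\rho(y)=(4k/\vartheta)\,\mathbf{1}_{[-\vartheta/(8k),\,\vartheta/(8k)]}(y)$ and $\phi=\rho^{*k}$ be its $k$-fold self-convolution --- a normalized $B$-spline with $\mathrm{supp}\,\phi\subseteq[-\vartheta/8,\vartheta/8]$, $\int\phi=1$, and $C^{k-2}$ regularity (which can be upgraded to $C^{k}$ by inflating the number of convolution factors by an additive constant, changing only absolute constants in the final bound). Then set
\[
\theta(y) \;=\; \bigl(\mathbf{1}_{[-7\vartheta/8,\,7\vartheta/8]} * \phi\bigr)(y).
\]
Standard support reasoning for convolutions yields $0\le\theta\le 1$, $\theta\equiv 1$ on $[-3\vartheta/4,3\vartheta/4]$, $\theta\equiv 0$ outside $[-\vartheta,\vartheta]$, and $\int\theta=\int\mathbf{1}\cdot\int\phi=7\vartheta/4$.

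For the Fourier bound I apply $\widehat{f*g}=\widehat{f}\,\widehat{g}$ to write
\[
\Theta(x) \;=\; \frac{\sin(7\pi\vartheta x/4)}{\pi x}\,\left(\frac{\sin(\pi x\vartheta/(4k))}{\pi x\vartheta/(4k)}\right)^{\!k}.
\]
The bound $|\Theta(x)|\le 7\vartheta/4$ is immediate from $|\Theta(x)|\le\int\theta$. Bounding the outer sinc by $1/(\pi|x|)$ and the inner factor by $1$ gives the second bound $1/(\pi|x|)$. For the third, keep the outer $1/(\pi|x|)$ estimate but bound each inner sinc using $|\sin t|\le 1$, producing $|\widehat{\phi}(x)|\le (4k/(\pi|x|\vartheta))^k = (k/(2\pi|x|\vartheta/8))^k$; multiplication yields the last term in the minimum.

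The construction itself is routine and I do not expect any deep obstacle. The only point of real care is bookkeeping the scalings so that the constants $3\vartheta/4$, $7\vartheta/4$, $\vartheta/8$ and the precise factor $k/(2\pi|x|\vartheta/8)$ come out exactly as stated, and matching the number of convolved boxes to the claimed degree of smoothness $C^{k}$.
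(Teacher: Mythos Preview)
Your construction is the standard one and the Fourier computation is carried out correctly with the exact constants: the support calculus gives the three ranges for $\theta$, and the factorisation of $\Theta$ into the outer sinc times the $k$-th power of the normalised sinc yields precisely the three bounds $7\vartheta/4$, $1/(\pi|x|)$, and $\frac{1}{\pi|x|}\big(\frac{k}{2\pi|x|\vartheta/8}\big)^k$. The paper itself gives no argument here --- its proof is the single line ``See \cite{Segal}'' --- so you are supplying strictly more than the paper does, and by the expected route.

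The one point worth flagging is the smoothness bookkeeping you already identify. With exactly $k$ box factors, $\phi\in C^{k-2}$ and hence $\theta=\mathbf{1}*\phi\in C^{k-1}$, not $C^k$. Your proposed fix of adding extra convolution factors would indeed restore $C^k$, but the exponent and the base in the third term of the minimum are tied to the number of factors, so this is not merely a change of ``absolute constants'': with $k+2$ factors one gets $\big(\frac{k+2}{2\pi|x|\vartheta/8}\big)^{k+2}$ instead. In practice this is harmless --- the paper only ever uses the Fourier decay, never the $C^k$ regularity per se, and versions of this lemma circulate in the literature with slightly inconsistent pairings of smoothness degree and decay exponent --- but strictly speaking the lemma as stated is off by one in the smoothness claim, and your write-up should say so rather than suggest the discrepancy can be absorbed.
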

\begin{proof} See \cite{Segal}.
\end{proof}

\begin{lemma}\label{log}
Let  $X\geq2$, $k\geq2$. We have
\begin{equation*}
 \sum\limits_{n\leq X}\frac{1}{\varphi(n)}\ll\log X\,.
\end{equation*}
\end{lemma}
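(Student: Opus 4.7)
The plan is to use the classical multiplicative identity
\[
\frac{n}{\varphi(n)}=\sum_{d\mid n}\frac{\mu^2(d)}{\varphi(d)},
\]
which is verified by checking both sides are multiplicative and agree on prime powers $p^a$ (where both equal $p/(p-1)$). Dividing through by $n$ yields $\frac{1}{\varphi(n)}=\frac{1}{n}\sum_{d\mid n}\frac{\mu^2(d)}{\varphi(d)}$, which I substitute into the sum.

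Next I would swap the order of summation by writing $n=dm$:
\[
\sum_{n\le X}\frac{1}{\varphi(n)}=\sum_{n\le X}\frac{1}{n}\sum_{d\mid n}\frac{\mu^2(d)}{\varphi(d)}=\sum_{d\le X}\frac{\mu^2(d)}{d\,\varphi(d)}\sum_{m\le X/d}\frac{1}{m}.
\]
For the inner sum I would apply the harmonic bound $\sum_{m\le X/d}\frac{1}{m}\le 1+\log(X/d)\ll\log X$ (valid since $X\ge 2$).

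This reduces the problem to showing that $\sum_{d\ge 1}\frac{\mu^2(d)}{d\,\varphi(d)}$ converges. Since the summand is a multiplicative function of $d$, it admits the Euler product
\[
\sum_{d\ge 1}\frac{\mu^2(d)}{d\,\varphi(d)}=\prod_{p}\left(1+\frac{1}{p(p-1)}\right),
\]
and this product is clearly convergent because $\sum_p \frac{1}{p(p-1)}<\infty$. Combining the two bounds gives $\sum_{n\le X}\frac{1}{\varphi(n)}\ll\log X$, which is what we want.

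There is essentially no obstacle: the argument is a standard Dirichlet-convolution manipulation. The only point requiring a moment of care is verifying the identity for $n/\varphi(n)$ on prime powers and recognising that the resulting arithmetic series converges absolutely; the hypothesis $k\ge 2$ in the statement is not used and appears to be a leftover parameter from a neighbouring lemma.
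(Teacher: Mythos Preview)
Your argument is correct: the identity $n/\varphi(n)=\sum_{d\mid n}\mu^2(d)/\varphi(d)$ is verified on prime powers as you describe, the swap of summation is legitimate since all terms are nonnegative, the harmonic bound $\sum_{m\le X/d}1/m\ll\log X$ holds uniformly for $d\le X$ when $X\ge 2$, and the Euler product $\prod_p(1+1/(p(p-1)))$ converges absolutely. Your remark that the hypothesis $k\ge 2$ plays no role is also accurate.

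As for comparison: the paper states this lemma without proof, treating it as a known elementary estimate (it is indeed classical, appearing for instance in Landau). So there is no alternative argument in the paper to contrast with yours; your self-contained derivation via the Dirichlet-convolution identity is a standard and entirely acceptable way to supply the missing justification.
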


\section{Outline of the proof.}
\indent

Consider the sum
\begin{equation}\label{Gamma}
\Gamma(X)= \sum\limits_{\lambda_0X<p_1,p_2,p_3\leq X\atop{|\lambda_1p_1+\lambda_2p_2+\lambda_3p_3+\eta|< \vartheta\atop{(p_{i}+2,P(z))=1,i=1,2,3}}}\log p_{1} \log p_{2}\log p_{3}\,.
\end{equation}
Any non-trivial estimate from below of $\Gamma(X)$ implies solvability of
$|\lambda_1p_1+\lambda_2p_2+\lambda_3p_3+\eta|<\vartheta$ in primes such that $p_{i}+2=P_h,\;h=[\beta^{-1}]$.

We have
\begin{equation}\label{Gammaest1}
\Gamma(X)\geq\widetilde{\Gamma}(X)=\sum\limits_{\lambda_0X<p_1,p_2,p_3\leq X\atop{(p_{i}+2,P(z))=1,i=1,2,3}}
\theta(\lambda_1p_1+\lambda_2p_2+\lambda_3p_3+\eta)\log p_1 \log p_2\log p_3\,.
\end{equation}
On the other hand
\begin{equation}\label{Gammawave}
\widetilde{\Gamma}(X)=\sum\limits_{\lambda_0X<p_1,p_2,p_3\leq X}\theta(\lambda_1p_1+\lambda_2p_2+\lambda_3p_3+\eta)
\Lambda_{1}\Lambda_{2}\Lambda_{3}\log p_1 \log p_2\log p_3\,,
\end{equation}
where
\begin{equation*}
\Lambda_i=\sum\limits_{d|(p_{i}+2,P(z))}\mu(d)\,,\,i=1,2,3
\end{equation*}
We denote
\begin{equation}\label{Lambdaipm}
\Lambda_i^{\pm}=  \sum\limits_{d|(p_{i}+2,P(z))}\lambda^{\pm}(d)\,,\,i=1,2,3
\end{equation}
From the linear sieve we know that $\Lambda_{i}^{-}\le \Lambda_{i}\le \Lambda_{i}^{+}$
(see \cite{Bru}, Lemma 10).
Then we have a simple inequality
\begin{equation}\label{Inequality}
\Lambda_1\Lambda_2\Lambda_3\ge \Lambda_1^{-} \Lambda_2^{+} \Lambda_3^{+}+
\Lambda_{1}^{+} \Lambda_{2}^{-} \Lambda_{3}^{+}+\Lambda_{1}^{+}\Lambda_{2}^{+}\Lambda_{3}^{-}-
2\Lambda_{1}^{+}\Lambda_{2}^{+} \Lambda_{3}^{+}
\end{equation}
analogous to this one in (\cite{Bru}, Lemma 13).\\
Using (\ref{Gammawave}) and (\ref{Inequality}) we obtain
\begin{align}\label{Gammawaveest}
\widetilde{\Gamma}(X)&\geq\Gamma_{0}(X)=\sum\limits_{\lambda_0X<p_1,p_2,p_3\leq X}
\theta(\lambda_1p_1+\lambda_2p_2+\lambda_3p_3+\eta)\nonumber\\
&\times(\Lambda_{1}^{-} \Lambda_{2}^{+} \Lambda_{3}^{+}+\Lambda_{1}^{+}\Lambda_{2}^{-}\Lambda_{3}^{+}
+\Lambda_{1}^{+}\Lambda_{2}^{+} \Lambda_{3}^{-}-2\Lambda_{1}^{+} \Lambda_{2}^{+} \Lambda_{3}^{+})
\log p_1\log p_2\log p_3\,.
\end{align}
Let
\begin{equation}\label{Gamma0}
\Gamma_{0}(X)=\Gamma_{1}(X)+\Gamma_{2}(X)+\Gamma_{3}(X)-2\Gamma_{4}(X)\,,
\end{equation}
where for example
\begin{equation}\label{Gamma1}
\Gamma_1(X)=\sum\limits_{\lambda_0X<p_1,p_2,p_3\leq X}\theta(\lambda_1p_1+\lambda_2p_2+\lambda_3p_3+\eta)
\Lambda_1^- \Lambda_2^+\Lambda_3^+\log p_1\log p_2\log p_3
\end{equation}
and so on.
We shall consider the sum $\Gamma_1(X)$. The rest can be considered in the same way.

From \eqref{Lambdaipm} and \eqref{Gamma1} we have
\begin{equation*}
\Gamma_1(X)=  \sum\limits_{d_{i}|P(z)\atop{i=1,2,3}}\lambda^-(d_1)\lambda^+(d_2)\lambda^+(d_3)
\sum\limits_{\lambda_0X<p_1,p_2,p_3\leq X\atop{p_{i}+2\equiv0\,(d_i), i=1,2,3}}
\theta(\lambda_1p_1+\lambda_2p_2+\lambda_3p_3+\eta)\log p_1\log p_2\log p_3.
\end{equation*}

Using the inverse Fourier transform for the function $\theta(x)$ we get
\begin{align*}
\Gamma_{1}(X)&=\sum\limits_{d_{i}|P(z)\atop{i=1,2,3}}\lambda^{-}(d_1)\lambda^{+}(d_2)\lambda^{+}(d_3)
\sum\limits_{\lambda_0X<p_1,p_2,p_3\leq X\atop{p_{i}+2\equiv0\,(d_{i}), i=1,2,3}}\log p_{1} \log p_{2}\log p_{3}\\
&\times\int\limits_{-\infty}^{\infty}\Theta(t)e((\lambda_1p_1+\lambda_2p_2+\lambda_3p_3+\eta)t)dt\\
&=\int\limits_{-\infty}^{\infty}\Theta(t)e(\eta t)L^-(\lambda_1t,X)L^+(\lambda_2t,X)L^+(\lambda_3t,X)dt\,,
\end{align*}
where
\begin{equation}\label{L}
L^\pm(t,X)=\sum\limits_{d|P(z)}\lambda^\pm(d)\sum\limits_{\lambda_0X<p\leq X\atop{p+2\equiv0\,(d)}}e(pt)\log p\,.
\end{equation}
We divide $\Gamma_{1}(X)$ into  three parts
\begin{equation}\label{Gama1}
\Gamma_1(X)=\Gamma_1^{(1)}(X)+\Gamma_1^{(2)}(X)+\Gamma_1^{(3)}(X)\,,
\end{equation}
where
\begin{equation}\label{Gamma1,1}
\Gamma_1^{(1)}(X)=\int\limits_{|t|\leq\tau}\Theta(t)e(\eta t)L^-(\lambda_1t,X)L^+(\lambda_2t,X)L^+(\lambda_3t,X)dt\,,
\end{equation}
\begin{equation}\label{Gamma1,2}
\;\;\;\;\Gamma_1^{(2)}(X)=\int\limits_{\tau<|t|<H}\Theta(t)e(\eta t)L^-(\lambda_1t,X)L^+(\lambda_2t,X)L^+(\lambda_3t,X)dt\,,
\end{equation}
\begin{equation}\label{Gamma1,3}
\Gamma_1^{(3)}(X)=\int\limits_{|t|\geq H}\Theta(t)e(\eta t)L^-(\lambda_1t,X)L^+(\lambda_2t,X)L^+(\lambda_3t,X)dt\,.
\end{equation}
We shall estimate $\Gamma_1^{(3)}(X)$, $\Gamma_1^{(1)}(X)$, $\Gamma_1^{(2)}(X)$ respectively in the sections 4, 5, 6.
In section 7 we shall complete the proof of the Theorem.
\section{Upper bound for $\mathbf{\Gamma_{1}^{(3)}(X)}$.}
\begin{lemma} For the integral $\Gamma_{1}^{(3)}(X)$, defined by (\ref{Gamma1,3}), we have
\begin{equation}\label{Gamma1,3est}
\Gamma_{1}^{(3)}(X)\ll1\,.
\end{equation}
\end{lemma}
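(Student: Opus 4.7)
The plan is to combine two ingredients: a trivial uniform bound for the exponential sums $L^{\pm}(\cdot, X)$ and the super-polynomial decay of $\Theta(t)$ for large $|t|$ encoded in Lemma~\ref{Fourier}.

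First I would prove the uniform bound $|L^{\pm}(t,X)| \ll X\log^2 X$ for every $t \in \mathbb{R}$. Starting from (\ref{L}), swap the two summations, use $|\lambda^{\pm}(d)| \leq 1$, and bound the inner sum over primes $\lambda_0 X < p \leq X$ with $p+2 \equiv 0 \pmod d$ trivially by $(X/d + O(1))\log X$. Summing over squarefree $d < D$ with $d \mid P(z)$ and using the harmonic bound $\sum_{d \leq D} 1/d \ll \log X$ yields $|L^{\pm}(t,X)| \ll X\log^2 X + D\log X \ll X\log^2 X$. Consequently the integrand in (\ref{Gamma1,3}) is bounded pointwise by $X^3 \log^6 X$ in absolute value, uniformly in $t$.

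Second I would apply Lemma~\ref{Fourier} with the parameter $k$ chosen to grow with $X$, for concreteness $k = \lfloor \log X\rfloor$, so that the underlying test function $\theta$ is fixed once and for all. The third term of the minimum gives
\[
|\Theta(t)| \leq \frac{1}{\pi|t|}\left(\frac{4k}{\pi|t|\vartheta}\right)^{k} \quad \text{for } |t| \geq H,
\]
and after the substitution $u = t\vartheta$, using $H\vartheta = \log^2 X$, a direct calculation yields
\[
\int_{|t|\geq H} |\Theta(t)|\,dt \ll \frac{1}{k}\left(\frac{4k}{\pi \log^2 X}\right)^{k} \leq \frac{1}{\log X}\left(\frac{4}{\pi\log X}\right)^{\lfloor\log X\rfloor},
\]
which decays faster than any fixed negative power of $X$.

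Multiplying the two estimates gives
\[
|\Gamma_1^{(3)}(X)| \ll X^3\log^6 X \cdot X^{-A} \ll 1
\]
for any sufficiently large fixed $A$, as desired. The only subtle point is the choice of $k$: with $k$ held fixed the integral above would only be polynomially small in $\log X$ and could not overcome the loss $X^3$ coming from the trivial bound on $L^{\pm}$, whereas letting $k$ grow slowly with $X$ makes the decay of $\Theta$ beat any polynomial in $X$. Beyond this bookkeeping, no genuine analytic input is required.
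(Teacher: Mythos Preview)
Your argument is correct and is the standard route; the paper itself gives no proof here but cites \cite{DimTod-1}, Lemma~2, where essentially the same combination of a trivial polynomial bound on $L^\pm$ and the decay of $\Theta$ with $k\asymp\log X$ is used.

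Two minor expository quibbles. First, the phrase ``swap the two summations'' should be dropped: in \eqref{L} the $d$-sum is already outermost, and your bound $(X/d+O(1))\log X$ on the inner prime sum applies directly to that form. Second, saying $\theta$ is ``fixed once and for all'' is misleading, since the function produced by Lemma~\ref{Fourier} depends on both $\vartheta$ and $k$, and with $\vartheta=X^{-1/12+\delta}$ and $k=\lfloor\log X\rfloor$ it varies with $X$; this is harmless because the whole construction in Section~3 already permits $\theta$ to depend on $X$ through $\vartheta$, but the sentence should be rephrased. Your observation that $k$ must grow with $X$ --- a fixed $k$ only yields $\int_{|t|\ge H}|\Theta(t)|\,dt\ll(\log X)^{-2k}$, which cannot absorb the loss $X^3\log^6 X$ --- is exactly the point worth stressing.
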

\begin{proof} See [\cite{DimTod-1}, Lemma 2].
\end{proof}
\section{Asymptotic formula for $\mathbf{\Gamma_{1}^{(1)}(X)}$.}
\indent

The first lemma we need in this section is the following.
\begin{lemma}\label{intLintI}Let $\lambda\neq0$.
Using the definitions (\ref{I}) and (\ref{L}) we have
\begin{align*}
&\emph{(i)}\quad\quad\quad\int\limits_{-\tau}^\tau|L^\pm(\lambda\alpha,X)|^2d\alpha\ll X\log^5X\,,\\
&\emph{(ii)}\quad\quad\quad\int\limits_{-\tau}^\tau|I(\lambda\alpha)|^2d\alpha\ll X\log X\,.
\end{align*}
\end{lemma}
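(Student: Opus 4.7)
The plan is to reduce both integrals to an application of Parseval's identity on the unit interval, exploiting the crucial fact that $\tau = X^{-5/6}\log X$ is very small, so $|\lambda|\tau < 1/2$ for $X$ large. First, in both parts I would perform the change of variables $\beta = \lambda\alpha$, so that the integrals become
\[
\frac{1}{|\lambda|}\int_{-|\lambda|\tau}^{|\lambda|\tau} |L^\pm(\beta,X)|^2\,d\beta \quad\text{and}\quad \frac{1}{|\lambda|}\int_{-|\lambda|\tau}^{|\lambda|\tau} |I(\beta)|^2\,d\beta,
\]
with $|\lambda|$ a fixed constant. Since the interval shrinks to zero, for all sufficiently large $X$ it lies inside $[-1/2,1/2]$, and the integrand is non-negative, so I may extend the range to the full period.

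For part (i), I write $L^\pm(\beta,X)=\sum_{\lambda_0 X<p\le X} c_p\, e(p\beta)$ with the exponents $p$ pairwise distinct integers, where
\[
c_p = \log p \cdot \sum_{d\mid (p+2,P(z))} \lambda^\pm(d).
\]
By orthogonality of the characters $e(p\beta)$ on $[-1/2,1/2]$ (Parseval), I obtain
\[
\int_{-1/2}^{1/2}|L^\pm(\beta,X)|^2\,d\beta = \sum_{\lambda_0 X<p\le X} |c_p|^2.
\]
Using the sieve bound $|\lambda^\pm(d)|\le 1$ from \eqref{Rosser}, I estimate $|c_p|\le \log p \cdot d(p+2)$, so the sum is at most $\log^2 X \sum_{n\le X} d(n+2)^2$. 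The standard mean-value estimate $\sum_{n\le X} d(n+2)^2 \ll X\log^3 X$ then gives the claimed $\ll X\log^5 X$.

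For part (ii), Parseval applied to $I(\beta)$ (equivalently, the isometry between $L^2$ on the real line and its Fourier image) yields
\[
\int_{-\infty}^{\infty}|I(\beta)|^2\,d\beta = \int_{\lambda_0 X}^{X} dy = (1-\lambda_0)X \ll X,
\]
which is even stronger than the stated $X\log X$. Alternatively one can simply use $|I(\beta)|\le \min(X,\,(\pi|\beta|)^{-1})$ and split the range at $|\beta|\asymp 1/X$.

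There is no essential obstacle: the key observations are that the tiny length of the integration window makes extension to a full period harmless, and that Parseval converts the exponential-sum $L^2$-integral into a divisor-type sum that is handled by a classical mean value. The sieve weights enter only through the trivial bound $|\lambda^\pm(d)|\le 1$, so no deep properties of the Rosser weights are needed here.
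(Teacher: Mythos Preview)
Your argument is correct and is genuinely different from the paper's. The paper does \emph{not} pass to the full period and invoke Parseval. Instead it expands $|L^\pm|^2$ as a double sum over $(p_1,d_1)$ and $(p_2,d_2)$, integrates the resulting oscillatory factor over the short interval $[-\tau,\tau]$ to get $\min\big(\tau,\,|\lambda(p_1-p_2)|^{-1}\big)$, and then splits according to whether $|p_1-p_2|\le 1/\tau$ or not. The two pieces are bounded by elementary counting of integers $n\equiv -2\pmod{d_i}$ in short and dyadic ranges, producing contributions of size $X/(d_1d_2)$ (up to a $\log X$), and the final summation over $d_1,d_2\le D$ of $1/(d_1d_2)$ supplies the remaining logarithms to reach $X\log^5 X$.

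Your route is shorter and more conceptual: the change of variable and the extension to $[-1/2,1/2]$ immediately reduce (i) to Parseval, and the sieve weights are absorbed by the trivial divisor bound $|c_p|\le (\log p)\,d(p+2)$ together with the classical second moment $\sum_{n\le X}d(n)^2\ll X\log^3 X$. The paper's argument, by contrast, never leaves the short window $[-\tau,\tau]$ and never invokes a divisor moment; it trades analytic economy for an explicit combinatorial count. Both yield exactly the same bound here, so neither is strictly stronger; yours is cleaner, while the paper's method would adapt more readily if one needed to keep track of the dependence on $\tau$ or to exploit further savings from the short range. For (ii) your Plancherel computation is again correct (and sharper than stated); the paper merely remarks that the same method as in (i) applies.
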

\begin{proof} We only prove (i). The inequality (ii) can be proved
likewise.

Having in mind  (\ref{tau}), (\ref{Rosser}) and (\ref{L})  we get
\begin{align}
\label{estIntSd2}
\int\limits_{-\tau}^\tau|L^\pm(\lambda\alpha,X))|^2d\alpha&=
\sum\limits_{d_i|P(z)\atop{i=1,2}}\lambda ^{\pm}(d_1)\lambda ^{\pm}(d_2)\nonumber\\
&\times\sum\limits_{\lambda_0X<p_1,p_2\leq X\atop{p_1+2\equiv0\,(d_1)
\atop{p_2+2\equiv0\,(d_2)}}}\log p_1\log p_2
\int\limits_{-\tau}^\tau e(\lambda(p_1-p_2)\alpha)d\alpha\nonumber\\
&\ll\sum\limits_{d_i\leq D\atop{i=1,2}}\sum\limits_{\lambda_0X<p_1,p_2\leq
X\atop{p_1+2\equiv 0\,(d_1)\atop{p_2+2\equiv 0\,(d_2)}}}\log p_1\log p_2
\min\bigg(\tau,\frac{1}{|p_1-p_2|}\bigg)\nonumber\\
&\ll\sum\limits_{d_i\leq D\atop{i=1,2}}\left(\tau\sum\limits_{\lambda_0X<p_1,p_2\leq
X\atop{p_1+2\equiv 0\,(d_1)\atop{p_2+2\equiv 0\,(d_2)\atop{|p_1-p_2|\leq1/\tau}}}}
\log p_1\log p_2+\sum\limits_{\lambda_0X<p_1,p_2\leq
X\atop{p_1+2\equiv 0\,(d_1)\atop{p_2+2\equiv 0\,(d_2)\atop{|p_1-p_2|>1/\tau}}}}
\frac{\log p_1\log p_2}{|p_1-p_2|}\right)\nonumber\\
&\ll(\log X)^2\sum\limits_{d_i\leq D\atop{i=1,2}}(U\tau+V)\,,
\end{align}
where
\begin{equation*}
U=\sum\limits_{\lambda_0X<n_1,n_2\leq
X\atop{n_1+2\equiv 0\,(d_1)\atop{n_2+2\equiv 0\,(d_2)\atop{|n_1-n_2|\leq1/\tau}}}}1\,,\quad
V=\sum\limits_{\lambda_0X<n_1,n_2\leq
X\atop{n_1+2\equiv 0\,(d_1)\atop{n_2+2\equiv 0\,(d_2)\atop{|n_1-n_2|>1/\tau}}}}\frac{1}{|n_1-n_2|}\,.
\end{equation*}
We have
\begin{align}
\label{USd}
U&\ll\mathop{\sum\limits_{\lambda_0X<n_1\leq
X\atop{n_1+2\equiv 0\,(d_1)}}\sum\limits_{\lambda_0X<n_2\leq X\atop{n_2+2\equiv 0\,(d_2)}}1}_
{n_1-1/\tau\leq n_2\leq n_1+1/\tau}
\ll\sum\limits_{\lambda_0X<n_1\leq X\atop{n_1+2\equiv 0\,(d_1)}}
\bigg(\frac{1}{\tau d_2}+1\bigg)\nonumber\\
&\ll\frac{1}{\tau d_2}\sum\limits_{\lambda_0X<n_1\leq X\atop{n_1+2\equiv 0\,(d_1)}}1
\ll\frac{X}{\tau d_1d_2}\,.
\end{align}

Obviously  $V\leq\sum\limits_{l}V_l$  where
\begin{equation}\label{VlSd}
V_l=\sum\limits_{\lambda_0X<n_1,n_2\leq
X\atop{l<|n_1-n_2|\leq2l\atop{n_1+2\equiv 0\,(d_1)\atop{n_2+2\equiv 0\,(d_2)}}}}\frac{1}{|n_1-n_2|}
\end{equation}
and $l$ takes the values $2^d/\tau,\,d=0,1,2,...$, with $l\leq X$.

We have
\begin{align}
\label{estVlSd}
V_l&\ll\frac{1}{l}\mathop{\sum\limits_{\lambda_0X<n_1\leq
X\atop{n_1+2\equiv 0\,(d_1)}}\sum\limits_{\lambda_0X<n_2\leq X\atop{n_2+2\equiv 0\,(d_2)}}1}_
{n_1+l\leq n_2\leq n_1+2l}
\ll\frac{1}{l}\sum\limits_{\lambda_0X<n_1\leq X\atop{n_1+2\equiv 0\,(d_1)}}\bigg(\frac{l}{d_2}+1\bigg)\nonumber\\
&\ll\frac{1}{d_2}\sum\limits_{\lambda_0X<n_1\leq X\atop{n_1+2\equiv0\,(d_1)}}1
\ll\frac{X}{d_1d_2}\,.
\end{align}
The assertion in (i) follows from (\ref{tau}), (\ref{estIntSd2}) -- (\ref{estVlSd}).
\end{proof}

The next lemma gives us asymptotic formula for the sums $L^\pm(\alpha,X)$ denoted by (\ref{L}).
\begin{lemma}\label{LAsympt} Let $D$ is defined by (\ref{D}),
and $\lambda(d)$ be complex numbers defined for $d\le D$ such that
\begin{equation}\label{omega}
|\lambda(d)|\leq1\,,\quad\lambda(d)=0\;\;\mbox{ if }\;\;2|d\;\;\mbox{ or }\;\;\mu(d)=0\,.
\end{equation}
If
\begin{equation*}
L(\alpha,X)=\sum\limits_{d\le D}\lambda(d)
\sum\limits_{\substack{ \lambda_0X<p\le X \\ p+2\equiv0\,(d)}}e(p\alpha)\log p
\end{equation*}
then for $|\alpha|\le\tau$ we have
\begin{equation}\label{Lasympt}
L(\alpha,X)=I(\alpha)\sum\limits_{d\leq D}\frac{\lambda(d)}{\varphi(d)}
+\mathcal{O}\bigg(\frac{X}{(\log X)^A}\bigg)\,,
\end{equation}
where $A>0$ is an arbitrary large constant.
\end{lemma}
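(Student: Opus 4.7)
The plan is to perform a Bombieri--Vinogradov analysis of the exponential sum over primes in arithmetic progressions, combined with a Barban--Davenport--Halberstam mean-square bound to handle the partial-summation loss.

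Begin by interchanging summation in $L(\alpha,X)$ to write $L(\alpha,X)=\sum_{d\le D}\lambda(d)T_d(\alpha)$, where
\[
T_d(\alpha)=\sum_{\substack{\lambda_0X<p\le X\\ p\equiv -2\,(d)}}e(p\alpha)\log p.
\]
Since $\lambda(d)$ vanishes off odd squarefree $d$, the residue $-2$ is coprime to every such $d$, and the inner sum runs over a single reduced residue class. Replace $\log p$ by $\Lambda(n)$ at aggregate cost $O(DX^{1/2})\ll X^{1-\eta}$ coming from prime powers.

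Writing $\psi(y;d,-2):=\sum_{n\le y,\,n\equiv -2\,(d)}\Lambda(n)=y/\varphi(d)+E(y,d)$, Abel's summation combined with integration by parts on the $y/\varphi(d)$ piece gives the identity
\[
T_d(\alpha)-\frac{I(\alpha)}{\varphi(d)}=e(\alpha X)E(X,d)-e(\alpha\lambda_0X)E(\lambda_0X,d)-2\pi i\alpha\int_{\lambda_0X}^X e(\alpha y)E(y,d)\,dy.
\]
The boundary contributions, summed against $\lambda(d)$, are dispatched by the classical Bombieri--Vinogradov theorem $\sum_{d\le D}\max_{y\le X}|E(y,d)|\ll X/\log^{A+1}X$, which is applicable since $D=X^{47/450-\varepsilon_0}$ sits well below the standard threshold $\sqrt X/\log^B X$.

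The crux of the argument is the integral term $I_1:=2\pi i\alpha\int_{\lambda_0X}^X e(\alpha y)F(y)\,dy$ with $F(y):=\sum_{d\le D}\lambda(d)E(y,d)$: a naive estimate $|\alpha|X\max_y|F(y)|$ would bring in the prohibitive factor $|\alpha|X\le X^{1/6}\log X$. To save this, apply Cauchy--Schwarz first in $y$ and then in $d$ (using $|\lambda(d)|\le 1$), to obtain
\[
|I_1|^2\ll|\alpha|^2\,X\cdot D\int_{\lambda_0X}^X\sum_{d\le D}|E(y,d)|^2\,dy.
\]
Now invoke the Barban--Davenport--Halberstam mean-square version of the Bombieri--Vinogradov theorem, which yields $\sum_{d\le D}|E(y,d)|^2\ll Dy\log y$, whence the right-hand side is $\ll DX^2\log X$. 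Substituting and using $|\alpha|\le\tau=X^{-5/6}\log X$ together with $D=X^{47/450-\varepsilon_0}$ gives
\[
|I_1|\ll\tau DX^{3/2}\sqrt{\log X}=X^{3/2+47/450-5/6+o(1)}=X^{347/450+o(1)}\ll\frac{X}{\log^A X},
\]
comfortably since $347/450<1$. Combining the main and error contributions completes the proof of the lemma.
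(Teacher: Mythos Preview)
Your argument has a genuine gap at the crucial step. The Barban--Davenport--Halberstam theorem asserts
\[
\sum_{q\le Q}\ \sum_{\substack{a=1\\(a,q)=1}}^{q}\big|E(y,q,a)\big|^{2}\ \ll\ Qy\log y
\]
only after summing over \emph{all} reduced residues $a$ modulo each $q$. The bound you invoke, $\sum_{d\le D}|E(y,d,-2)|^{2}\ll Dy\log y$ for the \emph{fixed} residue $-2$, is not a consequence of this; with $D=X^{47/450-\varepsilon_0}$ it would force $|E(y,d)|\ll (y\log y)^{1/2}$ in mean square over $d$, which is square-root cancellation of essentially GRH strength and is not known unconditionally. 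From Bombieri--Vinogradov the best you can legitimately extract is
\[
\sum_{d\le D}|E(y,d,-2)|^{2}\ \le\ \Big(\max_{d\le D}|E(y,d)|\Big)\sum_{d\le D}|E(y,d)|\ \ll\ \frac{y^{2}}{(\log y)^{A}},
\]
and feeding this into your Cauchy--Schwarz chain gives only $|I_{1}|\ll\tau D^{1/2}X^{2}(\log X)^{-A/2}$, which exceeds $X^{7/6}$ and is worse than the naive estimate you were trying to beat. (In fact your second Cauchy--Schwarz, in $d$, throws away the cancellation BV already provides in $F(y)=\sum_d\lambda(d)E(y,d)$; without it you recover exactly the naive $|\alpha|X\cdot X/\log^{A}X\ll X^{7/6}/\log^{A-1}X$.)

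You correctly diagnosed that straightforward partial summation loses a factor $|\alpha|X\le X^{1/6}\log X$ which BV alone cannot absorb; that obstruction is real. The paper itself does not give a self-contained argument but defers to Tolev's Lemma~10, and the point is that Tolev's proof does \emph{not} go through a mean-square bound on $E(y,d)$ at all. The exponential $e(n\alpha)$ is handled inside the arithmetic machinery rather than stripped off by Abel summation afterward: one route is to decompose $\Lambda(n)$ via Vaughan's (or Heath--Brown's) identity into Type~I and Type~II sums and apply the large sieve to the resulting bilinear forms with $e(n\alpha)$ built in; another, used in several of Tolev's papers, passes through the explicit formula and zero-density estimates for Dirichlet $L$-functions, so that the integral $\int y^{\rho}e(y\alpha)\,dy$ can be estimated with genuine oscillation. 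In either approach the saving of the extraneous $X^{1/6}$ comes from arithmetic input specific to primes, not from a variance inequality over the sieve moduli.
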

\begin{proof}
This lemma is very similar to results of Tolev  \cite{Tolev}.
Inspecting the arguments presented in (\cite{Tolev}, Lemma 10),
the reader will easily see that the proof of Lemma 5
can be obtained by the same manner.
\end{proof}
Let
\begin{align}\label{Mipm}
&L^\pm_i=L^\pm(\lambda_it,X)\,,\nonumber\\
&\mathcal{M}^\pm_i=\mathcal{M}^\pm(\lambda_it,X)
=I(\lambda_it)\sum\limits_{d\leq D}\frac{\lambda^\pm(d)}{\varphi(d)}\,.
\end{align}
We use the identity
\begin{align}\label{Identity}
L^\pm_1L^\pm_2L^\pm_3&=\mathcal{M}^\pm_1\mathcal{M}^\pm_2\mathcal{M}^\pm_3
+(L^\pm_1-\mathcal{M}^\pm_1)\mathcal{M}^\pm_2\mathcal{M}^\pm_3\nonumber\\
&+L^\pm_1(L^\pm_2-\mathcal{M}^\pm_2)\mathcal{M}^\pm_3+L^\pm_1L^\pm_2(L^\pm_3-\mathcal{M}^\pm_3)\,.
\end{align}
Replace
\begin{equation}\label{J1}
J_1=\int\limits_{|t|\leq\tau}\Theta(t)e(\eta t)\mathcal{M}^-(\lambda_1t,X)
\mathcal{M}^+(\lambda_2t,X)\mathcal{M}^+(\lambda_3t,X)dt\,.
\end{equation}
Then from  (\ref{L}), (\ref{Gamma1,1}), (\ref{Mipm}), (\ref{Identity}), (\ref{J1}),
Lemma \ref{Fourier} and Lemma \ref{LAsympt} we obtain
\begin{align}\label{Gamma1,1J1}
\Gamma_1^{(1)}(X)-J_1
&=\int\limits_{|t|\leq\tau}\Theta(t)e(\eta t)\Big(L^-(\lambda_1t,X)-\mathcal{M}^-(\lambda_1t,X)\Big)
\mathcal{M}^+(\lambda_2t,X)\mathcal{M}^+(\lambda_3t,X)dt\nonumber\\
&+\int\limits_{|t|\leq\tau}\Theta(t)e(\eta t)L^-(\lambda_1t,X)
\Big(L^-(\lambda_2t,X)-\mathcal{M}^-(\lambda_2t,X)\Big)\mathcal{M}^+(\lambda_3t,X)dt\nonumber\\
&+\int\limits_{|t|\leq\tau}\Theta(t)e(\eta t)L^-(\lambda_1t,X)
L^+(\lambda_2t,X)\Big(L^+(\lambda_3t,X)-\mathcal{M}^+(\lambda_3t,X)\Big)dt\nonumber\\
&\ll\vartheta\frac{X}{(\log X)^A}\Bigg(\int\limits_{|t|\leq\tau}
|\mathcal{M}^+(\lambda_2t,X)\mathcal{M}^+(\lambda_3t,X)|dt\nonumber\\
&+\int\limits_{|t|\leq\tau}|L^-(\lambda_1t,X)\mathcal{M}^+(\lambda_3t,X)|dt
+\int\limits_{|t|\leq\tau}|L^-(\lambda_1t,X)L^+(\lambda_2t,X)|dt\Bigg)\nonumber\\
&\ll\vartheta\frac{X}{(\log X)^A}\Bigg(\int\limits_{|t|\leq\tau}
|\mathcal{M}^+(\lambda_2t,X)|^2dt+\int\limits_{|t|<\tau}|\mathcal{M}^+(\lambda_3t,X)|^2dt\nonumber\\
&+\int\limits_{|t|\leq\tau}|L^-(\lambda_1t,X)|^2dt+\int\limits_{|t|\leq\tau}|L^+(\lambda_2t,X)|^2dt\Bigg)\,.
\end{align}
On the other hand (\ref{Mipm}) and Lemma \ref{log} give us
\begin{equation}\label{Mipmest}
|\mathcal{M}^\pm(\lambda_it,X)|\ll|I(\lambda_it)|\log X\,.
\end{equation}
Bearing in mind (\ref{Gamma1,1J1}), (\ref{Mipmest}) and Lemma \ref{intLintI} we find
\begin{equation}\label{Gama1,1J1est}
\Gamma_1^{(1)}(X)-J_1\ll\vartheta \frac{X^2}{(\log X)^{A-5}}\,.
\end{equation}
Arguing as in \cite{DimTod-1} for the integral defined by (\ref{J1}) we get
\begin{equation}\label{J1est}
J_1=B(X)\left(\sum\limits_{d|P(z)}\frac{\lambda ^{-}(d)}{\varphi(d)}\right)
\left(\sum\limits_{d|P(z)}\frac{\lambda ^{+}(d)}{\varphi(d)}\right)^2
+\mathcal{O}(\vartheta \tau^{-2}\log^3X)\,,
\end{equation}
where
\begin{equation*}
B(X)=\int\limits_{\lambda_0X}^{X}\int\limits_{\lambda_0X}^{X}\int\limits_{\lambda_0X}^{X}
\theta(\lambda_1y_1+\lambda_2y_2+\lambda_3y_3+\eta)dy_1dy_2dy_3\,.
\end{equation*}
According to (\cite{DimTod-1}, Lemma 4) we have
\begin{equation}\label{estB}
B(X)\gg\vartheta X^2.
\end{equation}
Usung  (\ref{tau}), (\ref{eps}),  (\ref{Gama1,1J1est}) and (\ref{J1est}) we obtain
\begin{equation}\label{Gam1,1}
\Gamma_1^{(1)}(X)=B(X)\left(\sum\limits_{d|P(z)}\frac{\lambda ^{-}(d)}{\varphi(d)}\right)
\left(\sum\limits_{d|P(z)}\frac{\lambda ^{+}(d)}{\varphi(d)}\right)^2
+\mathcal{O}\left(\vartheta \frac{X^2}{(\log X)^{A-5}}\right)\,.
\end{equation}
Let
\begin{equation}\label{Gpm}
G^\pm=\sum\limits_{d|P(z)}\frac{\lambda^{\pm}(d)}{\varphi(d)}\,.
\end{equation}
Thus from  (\ref{Gam1,1}) and (\ref{Gpm})  it follows
\begin{equation}\label{Ga1,1}
\Gamma_1^{(1)}(X)=B(X)G^-(G^+)^2+\mathcal{O}\left(\vartheta \frac{X^2}{(\log X)^{A-5}}\right)\,.
\end{equation}
\section{Upper bound for $\mathbf{\Gamma_{1}^{(2)}(X)}$.}
\indent

The treatment of the intermediate region depends on the following lemma.
\begin{lemma}\label{TodTolev1}
Suppose $\alpha \in \mathbb{R}$
with a rational approximation $\dfrac{a}{q}$ satisfying
$\ds \bigg|\alpha- \frac{a}{q}\bigg|<\frac{1}{q^2}$,
where  $(a, q) = 1, q \geq 1$, $a\neq0$. Let $D$ is defined by (\ref{D}),
and $\omega(d)$ be complex numbers defined for $d\le
D$ and let $\omega(d)\ll1$.
If
\begin{equation}\label{sn1}
   \mathfrak{L}(X)= \sum\limits_{d\le D}\omega(d)
        \sum\limits_{\substack{ X/2<p\le X \\ p+2\equiv 0\,(d)}}e(p\alpha)\log p
\end{equation}
then
\begin{equation*}
    \mathfrak{L}(X)\ll X^{\varepsilon}\bigg(X^{11/12}+\frac{X}{q^{1/2}}+
    X^{1/2}q^{1/2}+q\bigg)\,,
\end{equation*}
where $\varepsilon$ is an arbitrary small positive number.
\end{lemma}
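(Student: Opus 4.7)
The plan is to follow Tolev's approach (the author explicitly notes that Lemma~5 is obtained in this way, and Lemma~6 proceeds by the same template). First I reorganize $\mathfrak{L}(X)$ by interchanging summation. Writing $\Omega(n)=\sum_{d\mid n,\,d\leq D}\omega(d)$, which satisfies $\Omega(n)\ll X^{\varepsilon}$ by the divisor bound and $\omega(d)\ll 1$, I rewrite
\[
\mathfrak{L}(X)=\sum_{X/2<p\leq X}\Omega(p+2)\,e(p\alpha)\log p.
\]
Replacing $\log p$ by the von Mangoldt function $\Lambda(n)$ introduces only an $O(X^{1/2+\varepsilon})$ error (from prime powers). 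To the resulting sum $\sum_{X/2<n\leq X}\Lambda(n)\Omega(n+2)e(n\alpha)$ I apply Vaughan's identity with parameters $U,V$ to be optimized, producing a short initial piece $(n\leq V)$, two Type~I sums $\sum_{b\leq U}\mu(b)\sum_{c}\Omega(bc+2)e(bc\alpha)(\log c\text{ or }\Lambda(c))$, and a Type~II bilinear sum.

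For the Type~I sums I fix $b$ and expand $\Omega(bc+2)=\sum_{d\leq D}\omega(d)\mathbf{1}[d\mid bc+2]$. The inner sum over $c$ becomes a linear exponential sum in an arithmetic progression modulo $d/(b,d)$, which is controlled by $\min(X/(bd),\|b\alpha\cdot d_1\|^{-1})$ for an appropriate $d_1$; summing dyadically over $b\leq U$ and $d\leq D$ and invoking the rational approximation $|\alpha-a/q|<1/q^{2}$ yields a bound of the shape $X^{\varepsilon}(UD+X/q^{1/2}+X^{1/2}q^{1/2}+q)$.

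For the Type~II sum
\[
S=\sum_{M<m\leq 2M}a_m\sum_{K<k\leq 2K}b_k\,\Omega(mk+2)\,e(mk\alpha),\qquad MK\asymp X,\ U\leq M,K\leq X/U,
\]
I apply Cauchy--Schwarz in $m$, expand the square, and interchange summations to obtain
\[
|S|^{2}\ll X^{\varepsilon}M\sum_{k_1,k_2}\sum_{d_1,d_2\leq D}\;\sum_{m\,:\,d_i\mid mk_i+2}e(m(k_1-k_2)\alpha).
\]
Using the Chinese Remainder Theorem, the pair of conditions $d_i\mid mk_i+2$ collapses (when compatible) into a single congruence $m\equiv m_0\pmod{[d_1,d_2]}$, so the innermost $m$-sum is again a linear exponential sum in an arithmetic progression. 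The diagonal $k_1=k_2$ contributes $O(X^{1+\varepsilon}/K)$; the off-diagonal is estimated by the standard $\min(M,\|\cdot\|^{-1})$ device combined with the Dirichlet approximation to $\alpha$, producing terms of the shape $X/q^{1/2}+X^{1/2}q^{1/2}+q$ along with extra factors in $D$, $M$, $K$.

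The principal obstacle is the bookkeeping in the Type~II estimate: the $(d_1,d_2)$ summation inflates each bound by an essentially $D^{2}$ factor, which must be absorbed. Choosing $U=V=X^{1/3}$ (so $M,K\in[X^{1/3},X^{2/3}]$) and using $D\leq X^{47/450-\varepsilon_0}$ gives a Type~II contribution of order $X^{11/12+\varepsilon}$ plus the three $q$-dependent terms; the Type~I contribution is $O(X^{\varepsilon}(UD+\cdots))$, which with the above choices stays below $X^{11/12}$. Collecting both bounds produces the claimed estimate, and the admissible range of $D$ is precisely tuned so that the $D$-induced loss does not exceed $X^{11/12}$.
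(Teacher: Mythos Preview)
The paper does not prove this lemma; its entire proof is the single sentence ``See [Todorova--Tolev, Lemma~1].'' (That is a different reference and a different technique from the Tolev paper invoked for Lemma~5, so the two lemmas do not ``proceed by the same template'' as you assume.) There is thus no in-paper argument to compare against, and the question becomes whether your sketch is itself a valid proof.

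Your general strategy---interchange summation to form $\Omega(n)=\sum_{d\mid n,\,d\le D}\omega(d)$, apply Vaughan's identity, and handle Type~I and Type~II sums separately---is standard and presumably close to what Todorova--Tolev actually do. The gap is in the numerics. With $U=V=X^{1/3}$ the Type~II ranges are $M,K\in[X^{1/3},X^{2/3}]$, and the weight-free Type~II bound after Cauchy--Schwarz has $q$-independent part $\ll X^{1/2}\max(M,K)^{1/2}\le X^{5/6}$. If, as you assert, the $(d_1,d_2)$ summation inflates $|S|^{2}$ by essentially $D^{2}$ (hence $|S|$ by $D$), the $q$-independent term becomes
\[
D\,X^{5/6}=X^{47/450+375/450}=X^{422/450},
\]
which \emph{exceeds} $X^{11/12}=X^{412.5/450}$. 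So the combination ``$U=V=X^{1/3}$ together with a bare $D^{2}$ loss'' cannot deliver the stated $X^{11/12}$. One must either choose the Vaughan parameters differently (for instance $U=V=X^{2/5}$, where the classical baseline is $X^{4/5}$ and $DX^{4/5}=X^{407/450}<X^{11/12}$), or---more to the point---carry out the $(d_1,d_2)$ sum so as to retain the $1/[d_1,d_2]$ saving in the inner exponential sum rather than majorising it away. The same care is needed for the $q$-dependent terms: a crude off-diagonal estimate produces a contribution of order $XD/q^{1/2}$, which is not dominated by $X/q^{1/2}+X^{11/12}$ when $q$ is small. These are substantive gaps in the argument, not routine omissions.
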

\begin{proof} See [\cite{TodTolev1}, Lemma 1].
\end{proof}

Let us consider any sum $L^{\pm}(\alpha,\,X)$ denoted by (\ref{L}).
We represent it as sum of finite number
sums of the type
\begin{equation*}
  L(\alpha,\,Y)=\sum\limits_{d\le D}\omega(d)
  \sum\limits_{Y/2<p\leq Y\atop{p+2\equiv0\,(d)}}e(p\alpha)\log p\,,
\end{equation*}
where
\begin{equation*}
  \omega(d)=\left\{
           \begin{array}{ll}
             \lambda^{\pm}(d), & \hbox{if}\quad d|\,P(z)\,, \\
             0, & \hbox{otherwise.}
           \end{array}
         \right.
\end{equation*}
We have
\begin{equation*}
  L^{\pm}(\alpha,\,X)\ll \max\limits_{\lambda _0X\le Y\le X}\left|L(\alpha,\,Y)\right|\,.
\end{equation*}
If
\begin{equation}\label{Int_q}
  q\in\left[X^{1/6},\,X^{5/6}\right]\,,
\end{equation}
then from  Lemma \ref{TodTolev1} for the sums $L(\alpha,\,Y)$ we get
\begin{equation*}
  L(\alpha,\,Y)\ll Y^{11/12+\varepsilon}\,.
\end{equation*}
Therefore
\begin{equation}\label{L_i_alpha}
L^{\pm}(\alpha,\,X)\ll \max\limits_{\lambda _0X\le Y\le X}Y^{11/12+\varepsilon}\ll X^{11/12+\varepsilon}\,.
\end{equation}
Let
\begin{equation}\label{VtX}
V(t,\,X)=\min\left\{\left|L^{\pm}(\lambda_{1}t,\,X)\right|,\left|L^{\pm}(\lambda_2 t,\,X)\right|\right\}\,.
\end{equation}
We shall prove the following

\begin{lemma} Let $t,\,X,\,\lambda_1,\,\lambda_2\in \mathbb{R}$,
\begin{equation}\label{t_delta_H}
  |t|\in (\tau ,\,H)\,,
\end{equation}
where $\tau$ and $H$ are denoted by
(\ref{tau}) and (\ref{H}), $\lambda_1/\lambda_2 \in \mathbb{R}\backslash \mathbb{Q}$
 and $V(t,\,X)$ is defined by (\ref{VtX}).
Then there exists a sequence of real numbers
$X_1,\,X_2,\ldots \to \infty $ such that
\begin{equation}\label{Vtxj}
V(t\,,X_j)\ll X_j^{11/12+\varepsilon}\,,\quad j=1,2,\dots\,.
\end{equation}
\end{lemma}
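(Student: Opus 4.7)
\medskip
\noindent\textbf{Proof plan.} The plan is to apply Lemma \ref{TodTolev1} to whichever of $\lambda_1 t$ or $\lambda_2 t$ admits a rational approximation whose denominator falls in the medium window \eqref{Int_q}. Fix $t\in(\tau,H)$ and apply Dirichlet's theorem with parameter $X^{5/6}$ to each $\lambda_i t$, $i=1,2$, producing coprime integers $(a_i,q_i)$ with $1\le q_i\le X^{5/6}$ and
\[
|q_i\lambda_i t-a_i|\le X^{-5/6}.
\]
The lower bound $|t|>\tau=X^{-5/6}\log X$ guarantees $a_i\ne 0$ for $X$ large. If either $q_i\ge X^{1/6}$, then $q_i$ lies in \eqref{Int_q}, and the dyadic splitting that produced \eqref{L_i_alpha} combined with Lemma \ref{TodTolev1} gives $L^\pm(\lambda_i t,X)\ll X^{11/12+\varepsilon}$; by \eqref{VtX} this already yields \eqref{Vtxj}, uniformly in $X$.

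The core of the argument is excluding the case $q_1,q_2<X^{1/6}$ along a carefully chosen sequence $X_j$. Setting $\epsilon_i:=q_i\lambda_i t-a_i$ and using the identity $t(a_2q_1\lambda_1-a_1q_2\lambda_2)=a_2\epsilon_1-a_1\epsilon_2$ yields
\[
|a_2q_1\lambda_1-a_1q_2\lambda_2|\le\frac{(|a_1|+|a_2|)X^{-5/6}}{|t|}.
\]
Using the crude bound $|a_i|\le q_i|\lambda_i|\,|t|+1$ together with $|t|>\tau$ collapses the right-hand side to $\ll X^{1/6-5/6}=X^{-2/3}$. Dividing by $|\lambda_2|$, the rational $P/Q:=a_1q_2/(a_2q_1)$ approximates $\lambda_1/\lambda_2$ with $|Q(\lambda_1/\lambda_2)-P|\ll X^{-2/3}$, while
\[
|Q|=|a_2|\,q_1\le q_1\bigl(q_2|\lambda_2|\,|t|+1\bigr)\ll X^{1/3}H=X^{5/12-\delta}\log^2 X.
\]

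Finally I would take $X_j:=q_0^{12/5}$ as $a_0/q_0$ ranges over the infinitely many convergents of $\lambda_1/\lambda_2$ from \eqref{q0}. Then $q_0=X_j^{5/12}$ and $|Q|<q_0/2$ for $q_0$ large. Reducing $P/Q$ to lowest terms $P_0/Q_0$ gives $Q_0\le|Q|<q_0$, hence $P_0/Q_0\ne a_0/q_0$ and $|P_0/Q_0-a_0/q_0|\ge 1/(Q_0q_0)$; combining this with $|\lambda_1/\lambda_2-a_0/q_0|<1/q_0^2$ and $Q_0<q_0/2$ yields
\[
|Q(\lambda_1/\lambda_2)-P|\ge|Q_0(\lambda_1/\lambda_2)-P_0|\ge \frac{1}{2q_0}=\tfrac12 X_j^{-5/12},
\]
which contradicts the bound $\ll X_j^{-2/3}$ obtained above, once $X_j$ is large. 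The hard part is precisely this balancing of exponents: the scaling $X=q_0^{12/5}$ in \eqref{X} together with the choice of $H$ in \eqref{H} is exactly what keeps $|Q|$ below $q_0$ while making the implied approximation of $\lambda_1/\lambda_2$ sharper than the best-approximation barrier $1/q_0$, thereby forcing the contradiction and eliminating the bad case along the sequence $X_j$.
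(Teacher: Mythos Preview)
Your approach is essentially the same as the paper's: apply Dirichlet with parameter $X^{5/6}=q_0^2$, rule out $a_i=0$ via $|t|>\tau$, and in the bad case $q_1,q_2<X^{1/6}$ show that $a_1q_2/(a_2q_1)$ approximates $\lambda_1/\lambda_2$ so well that comparison with the convergent $a_0/q_0$ yields a contradiction along $X_j=q_0^{12/5}$. The paper writes $\lambda_1/\lambda_2=(a_1q_2)/(a_2q_1)\bigl(1+O(q_0^{-2})\bigr)$ directly, while you use the equivalent identity $t(a_2q_1\lambda_1-a_1q_2\lambda_2)=a_2\epsilon_1-a_1\epsilon_2$; the endgame is the same.

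There is one genuine slip, though. With the bound $|a_i|\le q_i|\lambda_i|\,|t|+1$ as you wrote it, you get
\[
\frac{(|a_1|+|a_2|)X^{-5/6}}{|t|}\le (q_1|\lambda_1|+q_2|\lambda_2|)X^{-5/6}+\frac{2X^{-5/6}}{|t|}\ll X^{-2/3}+\frac{2X^{-5/6}}{\tau}=X^{-2/3}+\frac{2}{\log X},
\]
and the $2/\log X$ term dominates, so the claimed bound $\ll X^{-2/3}$ does not follow; since your lower bound at the end is only $\gg X^{-5/12}$, this would kill the contradiction. The fix is immediate: either keep the sharper $|\epsilon_i|\le X^{-5/6}$ (then the second term becomes $2X^{-5/3}/\tau\ll X^{-5/6}$), or note that $a_i\ne 0$ forces $q_i|\lambda_i|\,|t|\ge |a_i|-X^{-5/6}\ge 1/2$, whence $|a_i|\le 2q_i|\lambda_i|\,|t|$ and the ``$+1$'' is absorbed. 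With either correction your argument goes through exactly as written.
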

\begin{proof} Our aim is to prove that
there exists a sequence $X_1,\,X_2,\,...\to \infty $ such that
for each $j=1,2,\ldots $
at least one of the numbers
$\lambda_{1}t$ and $\lambda_{2}t$ with t, subject to (\ref{t_delta_H})
can be approximated by
rational numbers with denominators, satisfying (\ref{Int_q}).
Then the proof follows from (\ref{L_i_alpha}) and (\ref{VtX}).

Let $q_0$ be sufficiently large and $X$ be such that
$X=q_0^{12/5}$ (see (\ref{X})).
Let us notice that there exist $a_1,\,q_1\in \mathbb{Z}$,
such that
\begin{equation}\label{lambda1_a1_q1}
  \bigg|\lambda_1t-\frac{a_1}{q_1}\bigg|<\frac{1}{q_1q_0^2}\,,
  \quad\quad (a_1,\,q_1)=1,\quad\quad 1\leq q_1\leq q_0^2,\quad\quad a_1\ne 0\,.
\end{equation}
From Dirichlet's Theorem (\cite{Karat}, p.158) it follows
the existence
of integers $a_1$ and $q_1$, satisfying the first three conditions.
If $a_1=0$ then $|\lambda_1t|<\ds \frac{1}{q_1q_0^2}$
and from (\ref{t_delta_H}) it follows
\begin{equation*}
 \lambda_1\tau< \lambda_1|t|< \frac{1}{q_0^2}\,,\quad\quad
 q_0^2< \frac{1}{\lambda_1\tau}\;.
\end{equation*}
From the last inequality,  (\ref{X}) and (\ref{tau}) we obtain
\begin{equation*}
  X^{5/6}<\frac{X^{5/6}}{\lambda_1\log X}\,,
\end{equation*}
which is impossible for large $q_0$, respectively, for a large $X$.
So $a_1\ne 0$.
By analogy there exist $a_2,\,q_2\in \mathbb{Z}$,
such that
\begin{equation}\label{lambda2_a2_q2}
  \bigg|\lambda_2t-\frac{a_2}{q_2}\bigg|<\frac{1}{q_2q_0^2}\,,
  \quad\quad (a_2,\,q_2)=1,\quad\quad 1\leq q_2\leq q_0^2,\quad\quad a_2\ne 0\,.
\end{equation}
If $q_i\in\bigg[X^{1/6},\,X^{5/6}\bigg]$
for $i=1$ or $i=2$, then the proof is completed.
From (\ref{X}), (\ref{lambda1_a1_q1}) and
(\ref{lambda2_a2_q2}) we have
\begin{equation*}
  q_i\le X^{5/6}=q_0^2\,,\quad i=1,2\,.
\end{equation*}
Thus it remains to prove that the case
\begin{equation}\label{qi_logx}
  q_i<X^{1/6}\,,\quad i=1,2\,
\end{equation}
is impossible.
Let $q_i<X^{1/6}$, $i=1,2$.
From (\ref{H}), (\ref{t_delta_H}), (\ref{lambda1_a1_q1}) -- (\ref{qi_logx})
it follows
\begin{align}
  & 1\le |a_i|<\frac{1}{q_0^2}+q_i\lambda_i|t|< \frac{1}{q_0^2}+q_i\lambda_i H\,,\nonumber\\
\label{ai}
& 1\le |a_i|<\frac{1}{q_0^2}+\lambda_iX^{1/4-\delta}\log^2X\,,\quad i=1,\,2\,.
\end{align}
We have
\begin{equation}\label{lambda12}
  \frac{\lambda_1}{\lambda_2}=\frac{\lambda_1t}{\lambda_2t}=
  \frac{\ds\frac{a_1}{q_1}+\bigg(\lambda_1t-\frac{a_1}{q_1}\bigg)}{\ds\frac{a_2}{q_2}+\bigg(\lambda_2t-\frac{a_2}{q_2}\bigg)}=
  \frac{a_1q_2}{a_2q_1}\cdot\frac{1+\mathfrak{T}_1}{1+\mathfrak{T}_2}\,,
\end{equation}
where $\mathfrak{T}_i=\dfrac{q_i}{a_i}\bigg(\lambda_it-\dfrac{a_i}{q_i}\bigg)\,,\; i=1,\,2$.
According to  (\ref{lambda1_a1_q1}), (\ref{lambda2_a2_q2}) and (\ref{lambda12}) we obtain
\begin{align}
  &|\mathfrak{T}_i|< \frac{q_i}{|a_i|}\cdot \frac{1}{q_iq_0^2}=\frac{1}{|a_i|q_0^2}\le \frac{1}{q_0^2}\,,\quad i=1,2\,,\nonumber\\
\label{lambd12}
  &\frac{\lambda_1}{\lambda_2}=\frac{a_1q_2}{a_2q_1}\cdot
  \frac{\ds 1+\Obig\bigg(\frac{1}{q_0^2}\bigg)}{\ds 1+\Obig\bigg(\frac{1}{q_0^2}\bigg)}=
 \frac{a_1q_2}{a_2q_1}\bigg(1+\Ob{\frac{1}{q_0^2}}\bigg)\,.\notag
\end{align}
Thus $\;\ds \frac{a_1q_2}{a_2q_1}=\Obig(1)$ and
\begin{equation}\label{lambd12new}
  \frac{\lambda_1}{\lambda_2}=\frac{a_1q_2}{a_2q_1}+\Obig\bigg(\frac{1}{q_0^2}\bigg)\,.
\end{equation}
Therefore, both fractions $\ds \frac{a_0}{q_0}$ and $\ds \frac{a_1q_2}{a_2q_1}$
approximate  $\ds \frac{\lambda_1}{\lambda_2}$.
Using (\ref{lambda1_a1_q1}),
(\ref{qi_logx}) and inequality
(\ref{ai}) with $i=2$ we obtain
\begin{equation}\label{a2q1}
  |a_2|q_1<1+\lambda_iX^{5/12-\delta}\log^2X<\frac{q_0}{\log X}\,
\end{equation}
so $|a_2|q_1\ne q_0$ and the fractions $\ds \frac{a_0}{q_0}$ and $\ds \frac{a_1q_2}{a_2q_1}$
are different. Then using (\ref{a2q1}) it follows
\begin{equation}\label{Contra}
  \bigg|\frac{a_0}{q_0}-\frac{a_1q_2}{a_2q_1}\bigg|=
  \frac{|a_0 a_2q_1-a_1q_2q_0|}{|a_2|q_1q_0}\ge \frac{1}{|a_2|q_1q_0}>\frac{\log X}{q_0^2}\,.
\end{equation}
On the other hand, from (\ref{q0}) and (\ref{lambd12new}) we have
\begin{equation*}
  \bigg|\frac{a_0}{q_0}-\frac{a_1q_2}{a_2q_1}\bigg|\le \bigg|\frac{a_0}{q_0}-\frac{\lambda_1}{\lambda_2}\bigg|+
  \bigg|\frac{\lambda_1}{\lambda_2}-\frac{a_1q_2}{a_2q_1}\bigg|\ll \frac{1}{q_0^2}\,,
\end{equation*}
which contradicts (\ref{Contra}).
This rejects the assumption (\ref{qi_logx}). Let
$q_0^{(1)},\,q_0^{(2)},\,\ldots$ be an infinite sequence
of values of $q_0$, satisfying (\ref{q0}).
Then using (\ref{X}) one gets an infinite sequence
$X_1,\,X_2,\,\ldots $ of values of $X$, such that
at least one of the numbers
$\lambda_{1}t$ and $\lambda_{2}t$
can be approximated by
rational numbers with denominators, satisfying (\ref{Int_q}).
Hence, the proof is completed.
\end{proof}

Let us estimate the integral
 $\Gamma_{1}^{(2)}(X_j)$, denoted by (\ref{Gamma1,2}).
Using (\ref{VtX}), (\ref{Vtxj}) and Lemma \ref{Fourier} we find
\begin{align}\label{Gamma1,2est1}
\Gamma_{1}^{(2)}(X_j)&\ll\vartheta\int\limits_{\tau<|t|<H}V(t,\,X_j)
\left(\left|L^{-}(\lambda_1 t,\,X_j)L^{+}(\lambda_3 t,\,X_j)\right|\right.
+\left.\left|L^{+}(\lambda_2 t,\,X_j)L^{+}(\lambda_3 t,\,X_j)\right|\right)dt\nonumber\\
&\ll\vartheta\int\limits_{\tau<|t|<H}V(t,X_j)
\left(\left|L^{-}(\lambda_1 t,\,X_j)\right|^2\right.
+\left.\left|L^{+}(\lambda_2 t,\,X_j)|^2+|L^{+}(\lambda_3 t,\,X_j)\right|^2\right)\,dt\nonumber\\
&\ll\vartheta X_j^{11/12+\varepsilon}\max\limits_{1\le k\le 3}\mathcal{I}_k\,,
\end{align}
where
\begin{equation*}
 \mathcal{I}_k=\int\limits_{\tau}^H\left|L^{\pm}(\lambda_k t,\,X_j)\right|^2\,dt.
 \end{equation*}
Arguing as in \cite{DimTod-1} we obtain
\begin{equation}\label{Ikest}
\mathcal{I}_k\ll X_j^{13/12-\delta}(\log X_j)^7\,.
\end{equation}
Using (\ref{Gamma1,2est1}), (\ref{Ikest}) and choosing $\varepsilon<\delta$ we get
\begin{equation}\label{Gamma1,2est2}
\Gamma_{1}^{(2)}(X_j)\ll\vartheta X_j^{11/12+\varepsilon}X_j^{13/12-\delta}(\log X_j)^7\ll
\vartheta \frac{X_j^2}{(\log X_j)^{A-5}}\,.
\end{equation}
Summarizing (\ref{Gama1}), (\ref{Gamma1,3est}), (\ref{Ga1,1}) and (\ref{Gamma1,2est2}) we find
\begin{equation}\label{Gama1est}
\Gamma_1(X_j)=B(X_j)G^-(G^+)^2+\mathcal{O}\left(\vartheta \frac{X_j^2}{(\log X_j)^{A-5}}\right)\,.
\end{equation}
\section{Proof of the Theorem.}
\indent

Since the sums $\Gamma_2(X_j)$, $\Gamma_3(X_j)$ and $\Gamma_4(X_j)$  are estimated in the same
way then from (\ref{Gamma}), (\ref{Gammaest1}), (\ref{Gammawaveest}), (\ref{Gamma0}) and
(\ref{Gama1est}) we obtain
\begin{equation}\label{Gammaest2}
\Gamma(X_j)\geq B(X_j)W(X_j)+\mathcal{O}\left(\vartheta \frac{X_j^2}{(\log X_j)^{A-5}}\right)\,,
\end{equation}
where
\begin{equation}\label{W}
W(X_j)=3\left(G^+\right)^2\left(G^--\frac{2}{3}G^+\right)
\end{equation}
and $G^\pm$ are defined by \eqref{Gpm}.\\
Let $f(s)$ and $F(s)$ are the lower and the upper functions of the linear sieve.
We know that if
\begin{equation}\label{s}
    s=\frac{\log D}{\log z},\quad 2\leq s\leq3
\end{equation}
then
\begin{equation}\label{Eli}
f(s)=\frac{2e^\gamma\log(s-1)}{s}\,, \qquad F(s)=\frac{2e^\gamma}{s}
\end{equation}
where $\gamma=0.577...$ is the Euler constant (see Lemma 10,\cite{Bru}).
Using (\ref{Gpm}) and Lemma 10 [1] we get
\begin{align}\label{G_iF_i}
  \mathcal{F}(z)\bigg(f(s)+&\mathcal{O}\left((\log X)^{-1/3}\right)\bigg)\nonumber\\
  &\quad\quad\quad\le G^{-} \le \mathcal{F}(z)
  \le G^{+}\nonumber\\
  &\quad\quad\quad\quad\quad\quad\quad\le \mathcal{F}(z)\bigg (F(s)+\mathcal{O}\big((\log X)^{-1/3}\big)\bigg )\,.
\end{align}
Here
\begin{equation}\label{piz}
    \mathcal{F}(z)=\prod\limits_{2<p\le z}\bigg(1-\frac{1}{p-1}\bigg)
   \asymp\frac{1}{\log X}\,.
\end{equation}
To estimate $W(X_j)$ from below we shall use the inequalities (see (\ref{G_iF_i})):
\begin{align}
\begin{split}\label{G1theta1}
   G^--\frac{2}{3}G^+&\geq \mathcal{F}(z)\bigg (f(s)-\frac{2}{3}F(s)+\mathcal{O}\big((\log X)^{-1/3}\big)\bigg )\\
   G^+&\geq \mathcal{F}(z)\,.
\end{split}
\end{align}
Let $X=X_j$. Then from (\ref{W}) and (\ref{G1theta1}) it follows
\begin{equation}\label{WepsXj}
W(X_j)\geq 3\mathcal{F}^3(z)\bigg (f(s)-\frac{2}{3}F(s)+\mathcal{O}\big((\log X)^{-1/3}\big)\bigg)
\end{equation}
Choose $s=2.948$.\\
Then by (\ref{z}), (\ref{D}) and (\ref{s}) we find
\begin{equation*}
\beta=0.035089.
\end{equation*}
It is not difficult to compute that for sufficiently large $X$ we have
\begin{equation}\label{fF}
  f(s)-\frac{2}{3}F(s)>10^{-5}.
\end{equation}
Choose $A\geq10$.\\
Then by (\ref{eps}), (\ref{estB}), (\ref{Gammaest2}), (\ref{piz}), (\ref{WepsXj}) and  (\ref{fF}) we obtain:
\begin{equation}\label{GammaXj}
  \Gamma(X_j)\gg \frac{X_j^{23/12+\delta}}{(\log X_j)^3}\,.
\end{equation}
The last inequality implies that $\Gamma(X_j) \rightarrow\infty$ as $X_j\rightarrow\infty$.

By the definition (\ref{Gamma}) of $\Gamma(X)$ and the inequality (\ref{GammaXj}) we conclude that for some
constant $c_0>0$ there are at least $c_0\frac{X_j^{23/12+\delta}}{(\log X_j)^6}$ triples of primes $ p_1, p_2, p_3$
satisfying $\lambda_0X_j< p_1, p_2, p_3\le X_j,\;|\lambda_1p_1+\lambda_2p_2+\lambda_3p_3+\eta|<\vartheta$
and such that for every prime factor $p$ of  $p_j+2,\,j=1,2,3$ we have $ p\geq X^{0.035089}$.

The proof of the Theorem is complete.

\vskip20pt
\footnotesize
\begin{flushleft}
S. I. Dimitrov\\
Faculty of Applied Mathematics and Informatics\\
Technical University of Sofia \\
8, St.Kliment Ohridski Blvd. \\
1756 Sofia, BULGARIA\\
e-mail: sdimitrov@tu-sofia.bg\\
\end{flushleft}


\begin{thebibliography}{}

\bibitem{Baker-Harman}R. Baker, G. Harman, {\it Diophantine approximation by prime numbers},
J. Lond. Math. Soc., {\bf 25}(2), (1982), 201 -- 215.

\bibitem{Bru}J. Br\"{u}dern and E. Fouvry, {\it Lagrange's Four Squares Theorem with almost prime
variables}, J. Reine Angew. Math., {\bf 454}, (1994), 59 -- 96.

\bibitem{DimTod-1}S. Dimitrov, T. Todorova, {\it Diophantine approximation by
 prime numbers of a special form}, Annuaire Univ. Sofia, Fac. Math. Inform.,
{\bf102}, (2015), 71 -- 90.

\bibitem{Harman}G. Harman, {\it Diophantine approximation by prime numbers},
J. Lond. Math. Soc., {\bf44}(2), (1991), 218 -- 226.

\bibitem{Iwa1}H. Iwaniec, {\it Rosser's sieve}, Acta Arith., {\bf 36}, (1980), 171 -- 202.

\bibitem{Iwa2}H. Iwaniec, {\it A new form of the error
term in the linear sieve}, Acta Arith., {\bf 37}, (1980), 307 -- 320.

\bibitem{Karat}A. Karatsuba, {\it Principles of the Analytic Number Theory},
Nauka, Moscow, (1983), (in Russian).

\bibitem{Mato1}K. Matom\"{a}ki, H. Shao, {\it Vinogradov's three primes theorem with almost twin primes},
arXiv:1512.03213v1 [math.NT]

\bibitem{Mato2}K. Matom\"{a}ki, {\it Diophantine approximation by primes}, Glasgow Math. J.,
{\bf 52}, (2010), 87 -- 106.

\bibitem{Segal}B. Segal, {\it On a theorem analogou's to Waring's theorem},
Dokl. Akad. Nauk SSSR (N. S.), {\bf2}, (1933), 47 -- 49, (in Russian).

\bibitem{TodTolev1}T. Todorova, D. Tolev, {\it On the distribution of $\alpha p$ modulo one for primes $p$ of a special
form}, Math. Slovaca, {\bf60}, (2010), 771 -- 786.

\bibitem{Tolev} D. Tolev, {\it On a diophantine inequality with prime
numbers of a special type}, arXiv:1701.07652v1 [math.NT]

\bibitem{Vaughan}R. Vaughan, {\it Diophantine approximation by prime numbers I.
 Proc. Lond. Math.Soc.}, {\bf 28}(3), (1974), 373 -- 384.
\end{thebibliography}
\end{document}